\definecolor{verylight}{gray}{0.97}
\definecolor{light}{gray}{0.9}
\definecolor{medium}{gray}{0.85}
\definecolor{dark}{gray}{0.6}
\def\frk{\frak}               
\def\Phi{{\frk n}}
\def\Phi{{\frk N}}
\def\opn#1#2{\def#1{\operatorname{#2}}} 
\opn\chara{char} \opn\length{\ell} \opn\pd{pd} \opn\rk{rk}
\opn\projdim{proj\,dim} \opn\injdim{inj\,dim} \opn\rank{rank}
\opn\depth{depth} \opn\grade{grade} \opn\height{height}
\opn\embdim{emb\,dim} \opn\codim{codim}
\opn\Tr{Tr} \opn\bigrank{big\,rank}
\opn\superheight{superheight}\opn\lcm{lcm}
\opn\trdeg{tr\,deg}
\opn\reg{reg} \opn\lreg{lreg} \opn\ini{in} \opn\lpd{lpd}
\opn\size{size}\opn\bigsize{bigsize}
\opn\cosize{cosize}\opn\bigcosize{bigcosize}
\opn\sdepth{sdepth}\opn\sreg{sreg}
\opn\link{link}\opn\fdepth{fdepth}
\opn\deg{deg}
\opn\max{max}
\opn\indeg{indeg}
\opn\min{min}
\opn\psln{psln}
\opn\div{div} \opn\Div{Div} \opn\cl{cl} \opn\Cl{Cl}
\let\epsilon\varepsilon
\let\phi=\varphi
\let\kappa=\varkappa
\opn\Spec{Spec} \opn\Supp{Supp} \opn\supp{supp} \opn\Sing{Sing}
\opn\Ass{Ass} \opn\Min{Min}\opn\Mon{Mon} \opn\dstab{dstab} \opn\astab{astab}
\opn\Syz{Syz}
\opn\Ann{Ann} \opn\Rad{Rad} \opn\Soc{Soc}
\opn\Im{Im} \opn\Ker{Ker} \opn\Coker{Coker} \opn\Am{Am}
\opn\Hom{Hom} \opn\Tor{Tor} \opn\Ext{Ext} \opn\End{End}
\opn\Aut{Aut} \opn\id{id}
\opn\nat{nat}
\opn\pff{pf}
\opn\Pf{Pf} \opn\GL{GL} \opn\SL{SL} \opn\mod{mod} \opn\ord{ord}
\opn\Gin{Gin} \opn\Hilb{Hilb}\opn\sort{sort}
\opn\initial{init}
\opn\ende{end}
\opn\height{height}
\opn\depth{depth}
\opn\type{type}
\opn\ldim{ldim}
\opn\aff{aff} \opn\con{conv} \opn\relint{relint} \opn\st{st}
\opn\lk{lk} \opn\cn{cn} \opn\core{core} \opn\vol{vol}
\opn\link{link} \opn\star{star}\opn\lex{lex}
\opn\gr{gr}
\def\pot#1#2{#1[\kern-0.28ex[#2]\kern-0.28ex]}
\opn\dirlim{\underrightarrow{\lim}}
\opn\inivlim{\underleftarrow{\lim}}
\def\Implies{\ifmmode\Longrightarrow \else
        \unskip${}\Longrightarrow{}$\ignorespaces\fi}
\def\implies{\ifmmode\Rightarrow \else
        \unskip${}\Rightarrow{}$\ignorespaces\fi}
\def\iff{\ifmmode\Longleftrightarrow \else
        \unskip${}\Longleftrightarrow{}$\ignorespaces\fi}
 \theoremstyle{plain}
\newtheorem{Theorem}{Theorem}[section]
 \newtheorem{Lemma}[Theorem]{Lemma}
 \theoremstyle{definition}
 \newtheorem{Remark}[Theorem]{Remark}
 \newtheorem{Example}[Theorem]{Example}
\let\epsilon\varepsilon
\let\kappa=\varkappa
\def\qed{\ifhmode\textqed\fi
      \ifmmode\ifinner\quad\qedsymbol\else\dispqed\fi\fi}
\def\textqed{\unskip\nobreak\penalty50
       \hskip2em\hbox{}\nobreak\hfil\qedsymbol
       \parfillskip=0pt \finalhyphendemerits=0}
\def\dispqed{\rlap{\qquad\qedsymbol}}
\opn\dis{dis}
\def\pnt{{\raise0.5mm\hbox{\large\bf.}}}
\opn\Lex{Lex}
\begin{document}

\author[Mafi and Naderi]{ Amir Mafi and Dler Naderi}
\title{On the first generalized Hilbert coefficient and depth of associated graded rings}

\address{Amir Mafi, Department of Mathematics, University Of Kurdistan, P.O. Box: 416, Sanandaj, Iran.}
\email{A\_Mafi@ipm.ir}
\address{Dler Naderi, Department of Mathematics, University of Kurdistan, P.O. Box: 416, Sanandaj,
Iran.}
\email{dler.naderi65@gmail.com}

\begin{abstract}
Let $(R,\frak{m})$ be a $d$-dimensional Cohen-Macaulay local ring with infinite residue field. Let $I$ be an ideal of $R$ that has analytic spread $\ell(I)=d$, satisfies the $G_d$ condition, the weak Artin-Nagata property $AN_{d-2}^-$ and $\depth(R/I)\geq\min\lbrace 1,\dim R/I \rbrace$. In this paper, we show that if $j_1(I) = \lambda (I/J) +\lambda [R/(J_{d-1} :_{R} I+(J_{d-2} :_{R}I+I) :_{R}{\frak{m}}^{\infty})]+1$, then  $\depth(G(I))\geq d -1$ and $r_J(I) \leq 2$, where $J$ is a general minimal reduction of $I$. In addition, we extend the result by Sally who has studied the depth of associated graded rings and minimal reductions for an $\frak{m}$-primary ideals.

\end{abstract}

\subjclass[2010]{13A30, 13D40, 13H15,13C14}
\keywords{Generalized Hilbert coefficient, Minimal reduction, Associated graded ring}

\maketitle
\section*{Introduction}
The associated graded ring of an ideal $I$ of $R$, defined as the graded algebra
\[G(I)= \oplus_{n=0}^{\infty} I^n /I^{n+1}\]
encodes algebraic and geometric properties of $I$. In this paper we investigate the behavior of the depth of the associated graded ring $G(I)$ of an ideal $I$ in a Noetherian local ring $(R,\frak{m})$. Its arithmetical properties, like its depth,
provide useful information, for instance, the high depth of the associated graded ring forces the vanishing of cohomology groups and thereby allows one to compute, or bound, relevant numerical invariants, such as the Castelnuovo-Mumford regularity  (see \cite{JU}, \cite{JK}).
For an $\frak{m}$-primary ideal $I$, the interplay between the Hilbert polynomial of $I$, more precisely its Hilbert coefficients, and
the depth of the associated graded ring has been widely investigated. The classical method, originated from the pioneering work of Sally, studies the interplay between the Hilbert coefficients of an $\frak{m}$-primary ideal and the depth of the associated graded ring.
The classical Hilbert functions are only defined for ideals that are primary to the maximal ideal,
 Achilles and Manaresi in \cite{AM} introduced the concept of the $j$-multiplicity as a generalization of the Hilbert multiplicity and in \cite{AM2} they also defined a generalized  Hilbert function using the bigraded ring of the associated graded ring with respect to the maximal ideal. Flenner, $O^{,}$Carroll and Vogel in \cite{FO} defined the generalized Hilbert function using the $0$-th local cohomology functor. Later Ciuperca in \cite{C} introduced the generalized Hilbert coefficients via the approach of Achilles and Manaresi. Polini and Xie in \cite{PX2} defined the concepts of the generalized Hilbert polynomial and the generalized Hilbert coefficients following the approach of Flenner, $O^{,}$Carroll and Vogel, and they proved that the generalized Hilbert coefficients as defined using the $0$-th local cohomology functor can also be obtained from the generalized Hilbert function of the bigraded ring of the associated graded ring with respect to a suitable ideal.
If $I$ is an $\frak{m}$-primary ideal in a Cohen-Macaulay local ring $R$, the positivity of $e_1(I)$ can be observed from the well-known Northcott's inequality $e_1(I)\geq e_0(I)-\lambda (R/I)=\lambda (R/J)-\lambda (R/I)=\lambda (I/J)$,
where $J$ is a minimal reduction of $I$. When equality holds, the ideal $I$ enjoys nice properties. Indeed, it was shown that $e_1(I)=\lambda (I/J)$ if and only if the reduction number of $I$ is at most $1$, and when this is the case, the associated graded ring of $I$ is Cohen-Macaulay (see \cite{H} and \cite{O}).
Xie in \cite{X} generalized Northcott's inequality to $R$-ideal as follow:
\[j_1(I) \geq \lambda (I/J) + \lambda [R/(J_{d-1} :_{R} I+(J_{d-2} :_{R} I+I) :_{R} {\frak{m}}^{\infty})] \]
and he proved that $j_1(I) =\lambda (I/J)+\lambda [R/(J_{d-1} :_{R} I+(J_{d-2} :_{R} I+I) :_{R} {\frak{m}}^{\infty})]$ if and only if $r(I)\leq 1$ and when this is the case, the associated graded ring of $I$ is Cohen-Macaulay.
Sally in \cite{S} studied $\frak{m}$-primary ideal $I$ satisfying the condition $e_1(I)=\lambda(I/J) +1$, with $e_2(I)> 0$ if $d \geq 2$ and she  proved that if $e_1(I)=\lambda(I/J) +1$, then $\depth(G(I))\geq d-1$ and $r_J(I)\leq 2$ for any minimal reduction $J$ of $I$.
This paper generalizes the above results to ideals of maximal analytic spread. In Section 1, we define generalized Hilbert-Samuel function of $I$ and recall definitions of residual intersections. In Section 2, we prove that if  $(R,\frak{m})$ is a Cohen-Macaulay local ring, $I$ a non $\frak{m}$-primary $R$-ideal which satisfies $\ell (I)=d$, the $G_d$ condition, the $AN_{d-2}^-$ and $\depth(R/I) \geq\min\lbrace 1,\dim R/I \rbrace$, then for a general minimal reduction $J=(x_1, ... ,  x_d)$ of $I$, one has that $j_1(I) \geq \lambda (I/J) +\lambda (I^2/JI)+\lambda [R/(J_{d-1} :_{R} I+(J_{d-2} :_{R} I+I) :_{R} {\frak{m}}^{\infty})]$. When $j_1(I) = \lambda (I/J) +\lambda [R/(J_{d-1} :_{R} I+(J_{d-2} :_{R} I+I) :_{R} {\frak{m}}^{\infty})]+1$ then $\depth(G(I))\geq d-1$ and $r_J(I) \leq 2$ for any general minimal reduction $J=(x_1, ... ,  x_d)$ of $I$.

\section{ Preliminary}
 In this paper, we always assume that $(R,\frak{m}, k)$ is a Noetherian local ring of dimension $d$ with maximal ideal $\frak{m}$ and infinite residue field $k$.
 For an ideal $I$ of $R$, we will denote by $G(I)=\oplus_{n=0}^{\infty} I^n /I^{n+1}$ the associated graded algebra of $I$, and by $\mathcal{F}(I) = \oplus_{n=0}^{\infty} I^n /{\frak{m}}I^{n}$ the fiber cone of $I$. The dimension of $G(I)$ is always $d$ and the the dimension of $\mathcal{F}(I)$ is called the analytic spread of $I$ and is denoted by $\ell(I)$.

 Recall that an ideal $J\subseteq I$ is called a reduction of $I$ if $JI^r=I^{r+1}$ for some non-negative integer $r$. The least such $r$ is denoted by $r_J(I)$. A reduction ideal is minimal if it is minimal with respect to inclusion. The reduction number $r(I)$ of $I$ is defined as $\min\lbrace r_J(I) ~|~ J$ is a minimal reduction of I$\rbrace$. Since $R$ has infinite residue field, the minimal number of generators $\mu (J)$ of any minimal reduction $J$ of $I$ equals the analytic spread $\ell(I)$.

 Let $I=(a_1, ..., a_t)$ and write $x_i=\sum_{j=1}^{t}\lambda_ {ij}a_j$ for $1 \leq i \leq s$ and $\lambda_ {ij} \in R^{st}$. The elements $x_1, ..., x_s$ form a sequence of general elements in $I$ (equivalently $x_1, ..., x_s$ are general in $I$) if there exists a Zariski dense open subset $U$ of $k^{st}$ such that the image $(\overline{\lambda_ {ij}} )\in U$. When $s =1$, $x =x_1$ is said to be general in $I$.
 Furthermore, general $\ell(I)$ elements in $I$ form a minimal reduction $J$ whose $r_J(I)$ coincides with the reduction number $r(I)$ (see \cite{HS} or \cite{T}). One says that $J$ is a general minimal reduction of $I$ if it is generated by $\ell(I)$ general elements in $I$.

We recall the concept of the generalized Hilbert-Samuel function of $I$. In $G(I)= \oplus_{n=0}^{\infty} I^n/I^{n+1}$  the associated graded ring of $I$, as the homogeneous components of $G(I)$ may not have finite length, one considers the submodule of $G(I)$ of elements supported on $\frak{m}$ as follow:
 \[ W=\lbrace \xi \in G ~| ~\exists ~ t >0 ~such ~that ~\xi.{\frak{m}}^t=0\rbrace =H^{0}_{\frak{m}}(G) =\oplus_{n=0}^{\infty} H^{0}_{\frak{m}}( I^n/I^{n+1}).\]
Since $W$ is a finite graded module over $G(I) \otimes_R R/{\frak{m}}^\alpha $ for some $\alpha \geq 0$, its Hilbert-Samuel function $H_W(n) =\sum_{j=0}^{n} \lambda (H^{0}_{m}(I^j/I^{j+1}))$ is well defined. The generalized Hilbert-Samuel function of $I$ is defined to be: $H_I(n) =H_W(n)$ for every $n \geq 0$. The definition of generalized Hilbert-Samuel function was introduced by Flenner, $O^{,}$Carroll and Vogel in \cite[ Definition 6.1.5]{FO}, and studied later by Polini and Xie in \cite{PX2}. Since $\dim_G W \leq\dim R=d$, $H_I(n)$ is eventually a polynomial of degree at most $d$,
\[ P_I (n) =\sum_{i=0}^{d}(-1)^{i} j_{i}(I) \left( {\begin{array}{*{20}{c}}
{n + d - i}\\
{d - i}
\end{array}} \right).\]
Polini and Xie defined $P_I(n)$ to be the generalized Hilbert-Samuel polynomial of $I$ and $j_i(I)$, $0 \leq i \leq d$, the generalized Hilbert coefficients of $I$. The normalized leading coefficient $j_0(I)$ is called the $j$-multiplicity of $I$ (see \cite{AM}, or \cite{PX2}). The next normalized coefficient $j_1(I)$ is called the generalized first Hilbert coefficient. Xie provided a formula relating the length $\lambda (I^{n+1}/JI^n)$ to the difference $P_I(n) -H_I(n)$, where $I$ is an  $R$-ideal (genralized of fundamental Lemma of Huneke \cite[Lemma 2.4]{H} and extension of this Lemma by Huckaba \cite[Theorem 2.4]{HUC}) and find a formula for generalized first Hilbert coefficient (see \cite[Theorem 3.2, Corollary 3.3]{X}).
In general, $\dim_{G} W \leq \ell(I) \leq d$ and equalities hold if and only if $\ell(I) =d$. Therefore $j_0(I) \ne 0$ if and only if $\ell(I)=d$ (see \cite{AM2} or \cite{NU}).
If $I$ is an $\frak{m}$-primary, each homogeneous component of $G$ has finite length, thus $W=G$ and the generalized Hilbert-Samuel function coincides with the usual Hilbert-Samuel function; in particular, the generalized Hilbert coefficients $j_i(I)$, $0 \leq i\leq d$, coincide with the usual Hilbert coefficients $e_i(I)$.
We now recall some definitions and facts from the theory of residual intersections which will be used frequently in the rest of the paper.
\begin{enumerate}

\item[(i)] The ideal $I$ is said to satisfy the $G_{s+1}$ condition if for every ${\frak{p}}\in V(I)$ with $\height{\frak{p}}=i\leq s$, the ideal $I_{\frak{p}}$ is generated by $i$ elements, i.e., $I_{\frak{p}}=(x_1, ..., x_i)_{\frak{p}}$ for some $x_1, ..., x_i$ in $I$.
\item[(ii)]
Let $J_s=(x_1, ..., x_s)$, where $x_1, ..., x_s$ are elements in $I$. Then $J_{s}:I$ is called a $s$-residual intersection of $I$ if $I_{\frak{p}}=(x_1, ..., x_s)_{\frak{p}}$ for every ${\frak{p}}\in\Spec(R)$ with $\dim R_{\frak{p}}\leq s -1$.
\item[(iii)]
A $s$-residual intersection $J_{s}:I$ is called a geometric $s$-residual intersection of $I$ if, in addition, $I_{\frak{p}}=(x_1, ..., x_s)_{\frak{p}}$ for every ${\frak{p}}\in V(I)$ with $\dim R_{\frak{p}}\leq s $.
\item[(iv)]
If $I$ satisfies the $G_s$ condition, then for general elements $x_1, ..., x_s$ in $I$ and each $0\leq i < s$, the ideal $J_{i}:I$ is a geometric $i$-residual intersection of $I$, and $J_{s}:I$ is a $s$-residual intersection of $I$ (see \cite{U} and \cite{PX}).
\item[(v)]
Let $R$ be Cohen-Macaulay, the ideal $I$ has the Artin-Nagata property $AN_{s}^{-}$ if, for every $0 \leq i\leq s$ and every geometric $i$-residual intersection $J_{i}:I$ of $I$, one has that $R/J_{i}:I$ is Cohen-Macaulay (see \cite{U}).
\end{enumerate}

It is well-known that the $G_d$ condition and the Artin-Nagata property $AN_{d-2}^{-}$ are automatically satisfied by any $\frak{m}$-primary ideal in a Cohen-Macaulay local ring.
From now on, we will assume $I$ has $\ell(I) =d$ and satisfies the $G_d$ condition. Let $J=(x_1, ..., x_d)$, where $x_1, ..., x_d$ are general elements in $I$, i.e., $J$ is a general minimal reduction of $I$. For $i \leq d -1$, set $J_i=(x_1, ..., x_i)$ (with the convention $J_i=(0)$ if $i \leq 0$), $R^{d-1}=R/J_{d-1}:I^{\infty}$, where $J_{d-1}:I^{\infty}= \lbrace a \in R~ |~ \exists ~ t >0 ~such~that~a .I^t \subseteq  J_{d-1}\rbrace$. Then $R^{d-1}$ is a $1$-dimensional Cohen-Macaulay local ring and $IR^{d-1}$ is ${\frak{m}}R^{d-1}$-primary. Hence the generalized Hilbert-Samuel function $H_{IR^{d-1}}(n)$ and the generalized Hilbert-Samuel polynomial $P_{IR^{d-1}}(n)$ are respectively the usual Hilbert-Samuel function and the usual Hilbert-Samuel polynomial of $IR^{d-1}$. Note that $H_{IR^{d-1}}(n)$ and hence $P_{IR^{d-1}}(n)$ do not depend on choices of general elements $x_1, ..., x_{d-1}$ in $I$, and $P_{IR^{d-1}}(n) =e_0(IR^{d-1})(n +1) -e_1(IR^{d-1})$, where $e_0(IR^{d-1}) =\lambda (R^{d-1}/x_d R^{d-1}) =j_0(IR^{d-1})$(see \cite{PX2}). If $R$ is Cohen-Macaulay and $I$ is $\frak{m}$-primary, then $e_1(IR^{d-1}) =e_1(I)$ (see for instance \cite[Proposition 1.2]{RV}). But they are in general not the same.

\section{ main result}
In this section we find lower bound for first Hilbert coefficient $j_1(I)$, where $I$ is not an $\frak{m}$-primary ideal with $\ell(I) =d$, and satisfies the $G_d$ condition and the $AN_{d-2}^{-}$, $J$ is a general minimal reduction of $I$ (see Theorem \ref{P2}). Also, we prove that if  $j_1(I) = \lambda (I/J) +\lambda [R/(J_{d-1} :_{R} I+(J_{d-2} :_{R} I+I) :_{R}{\frak{m}}^{\infty})] +1$, then $G(I)$ is almost Cohen-Macaulay (see Theorem \ref{P3}).

\begin{Remark} \label{R1}
Assume $R$ is Cohen-Macaulay. Let $I$ be an $R$-ideal which satisfies $\ell (I) =d$, the $G_d$ condition, the $AN_{d-2}^-$ and $J=(x_1, ... ,  x_d)$ general minimal reduction of $I$. Since $I$ satisfies $\ell (I) =d$ and the $G_d$ condition, one has that $J_i:I$ is a geometric $i$-residual intersection of $I$, where $J_i=(x_1, ..., x_i)$, $0 \leq i \leq d -1$ \cite[Lemma 3.1]{PX}. By the weak Artin-Nagata property $AN_{d-2}^-$, one has that $J_i:I^{\infty}=J_i:I=J_i:x_{i+1}$ and $(J_i:I^{\infty}) \cap I=(J_i:I) \cap I=J_i$ for $0 \leq i \leq d -1$ \cite[Lemma 3.2]{PX} and \cite{U}. Therefore  $I^{n+1} \cap (J_i:I^{\infty})  =I^{n+1} \cap  J_i $ and $JI^n \cap (J_i:I^{\infty})  =JI^n \cap J_i $ for $n \geq 0$ and $0 \leq i \leq d -1$.

\end{Remark}

\begin{Lemma}\label{l1}
Assume $R$ is Cohen-Macaulay. Let $I$ be a non $\frak{m}$-primery  $R$-ideal which satisfies $\ell (I) =d$, the $G_d$ condition, the $AN_{d-2}^-$, $J=(x_1, ... ,  x_d)$ general minimal reduction of $I$ and $\depth(R/I) \geq\min\lbrace 1,\dim R/I \rbrace$. Then $I^2 \cap J_{i} =J_{i} I$ for any $0 \leq i \leq d-1$
\end{Lemma}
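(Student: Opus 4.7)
My approach is to induct on $i$. The base case $i = 0$ is immediate since $J_0 = (0)$ and both sides vanish.

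For the inductive step, fix $a \in I^2 \cap J_i$ and write $a = b + x_i r$ with $b \in J_{i-1}$ and $r \in R$; then $x_i r = a - b \in I^2 + J_{i-1}$. The bulk of the argument consists of establishing the colon identity
\[
(I^2 + J_{i-1}) :_R x_i \;=\; I + (J_{i-1} :_R I^\infty).
\]
Granting this, write $r = s + t$ with $s \in I$ and $t \in J_{i-1} :_R I^\infty$. By Remark~\ref{R1} we have $J_{i-1} :_R I^\infty = J_{i-1} :_R I$, so $t I \subseteq J_{i-1}$ and in particular $x_i t \in J_{i-1}$. Consequently $a - x_i s = b + x_i t \in J_{i-1}$, while simultaneously $a - x_i s \in I^2$ since $a \in I^2$ and $x_i s \in x_i I \subseteq I^2$. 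The inductive hypothesis then gives $a - x_i s \in I^2 \cap J_{i-1} = J_{i-1} I$, whence $a \in x_i I + J_{i-1} I = J_i I$, as required.

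The main obstacle is the colon identity itself. The inclusion $\supseteq$ is routine from $x_i I \subseteq I^2$ together with $x_i (J_{i-1} :_R I^\infty) \subseteq J_{i-1}$ (the latter using Remark~\ref{R1}). For the reverse inclusion, I would pass to the residual intersection quotient $\overline{R} := R/(J_{i-1} :_R I^\infty)$. By Remark~\ref{R1} together with the $AN_{d-2}^-$ property, $\overline{R}$ is Cohen--Macaulay of dimension $d - i + 1 \geq 2$ and $\overline{x}_i$ is a non-zero-divisor on $\overline{R}$. Using $I \cdot (J_{i-1} :_R I^\infty) \subseteq J_{i-1}$ one checks that the image of $I^2$ in $\overline{R}$ coincides with $\overline{I}^2$, so the claim reduces to $(\overline{I}^2 :_{\overline{R}} \overline{x}_i) \subseteq \overline{I}$ in $\overline{R}$, equivalently, to the injectivity of multiplication by $\overline{x}_i$ on $\overline{R}/\overline{I}$.

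This last step is where I expect the real difficulty and where the hypothesis $\depth(R/I) \geq 1$ becomes essential, combined with the genericity of $x_i$: a standard avoidance-of-associated-primes argument should show that a generic element of $\overline{I}$ cannot lie in any associated prime of $\overline{R}/\overline{I}$ (the depth condition on $R/I$ passes through the residual intersection to guarantee that $\mathfrak{m}$ is not associated), forcing the annihilator of $\overline{x}_i$ on $\overline{R}/\overline{I}$ to vanish. This is the part of the proof that exploits both the depth hypothesis on $R/I$ and the genericity of the minimal reduction $J$, and is where one must work hardest.
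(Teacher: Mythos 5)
There is a genuine gap, and it sits exactly where you say the real work is. Your reduction of the inductive step to the colon identity $(I^2+J_{i-1}):_R x_i = I+(J_{i-1}:_RI^\infty)$ is fine, and so is the passage to $\overline R=R/(J_{i-1}:_RI^\infty)$, where the identity becomes $\overline I^2:_{\overline R}\overline x_i\subseteq\overline I$. But this containment is \emph{not} equivalent to injectivity of multiplication by $\overline x_i$ on $\overline R/\overline I$, and no avoidance-of-associated-primes argument can establish that injectivity: since $\overline x_i\in\overline I$, multiplication by $\overline x_i$ is the zero endomorphism of $\overline R/\overline I$, and every associated prime of $\overline R/\overline I$ contains $\overline I$ and hence $\overline x_i$. (What $\overline I^2:\overline x_i=\overline I$ actually expresses is injectivity of the map $\overline R/\overline I\xrightarrow{\ \overline x_i\ }\overline I/\overline I^2$, i.e.\ a degree-one Valabrega--Valla/superficiality condition on the initial form $\overline x_i^*$ in $G(\overline I)$ --- a genuinely harder statement that prime avoidance for $\overline R/\overline I$ does not touch.) Note also that for $i=1$ with $\grade(I)>0$ your colon identity is literally equivalent to $I^2\cap(x_1)=x_1I$, so the induction has not reduced the difficulty; the entire content of the lemma is concentrated in the step you left open, and the sketch offered for it fails.

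The paper avoids this by running the argument in the opposite direction. The top case $I^2\cap J_{d-1}=J_{d-1}I$ is extracted from the hypothesis $\depth(R/I)\geq 1$ via the cited result $J_{d-1}:_{I^2}I^\infty=J_{d-1}I$ (Lemma 3.2(f) of Polini--Xie) together with Remark \ref{R1}; this is precisely the superficiality-type input your sketch is missing. The lower cases then follow by a short descending computation: from $I^2\cap J_{i}\subseteq I^2\cap J_{i+1}=J_{i+1}I=J_{i}I+x_{i+1}I$ one intersects with $J_{i}$ and uses the modular law plus $(J_{i}:x_{i+1})\cap I=(J_{i}:I)\cap I=J_{i}$ from Remark \ref{R1} to conclude $I^2\cap J_{i}=J_{i}I$. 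If you want to salvage your ascending induction, you would need to prove the colon identity by invoking that same Polini--Xie lemma in the quotient $\overline R$ (after checking its hypotheses persist there, as in Lemma \ref{l2}), rather than by the prime-avoidance argument you propose.
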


\begin{proof}
Since $\depth(R/I)\geq 1$, by  \cite[Lemma 3.2(f)]{PX} we have $J_{d-1} :_{I^2}  I^{\infty}= J_{d-1} I$. Then by Remark \ref{R1}, $I^2 \cap J_{d-1} = J_{d-1}I$. Now we show that $I^2 \cap J_{d-2} = J_{d-2}I$,
\[I^2 \cap J_{d-2} \subseteq  I^2 \cap J_{d-1} = J_{d-1}I =J_{d-2}I +x_{d-1} I\]
 intersect both side with $J_{d-2}$ we have
\begin{align*}
I^2 \cap J_{d-2} &=(J_{d-2}I +x_{d-1} I) \cap J_{d-2}\\
&=J_{d-2}I +(J_{d-2} \cap x_{d-1} I)\\
&=J_{d-2}I +(J_{d-2} \cap x_{d-1}R \cap x_{d-1} I)\\
&=J_{d-2}I+x_{d-1}(J_{d-2}: x_{d-1} \cap I)\\
&=J_{d-2}I+x_{d-1}J_{d-2}\\
&= J_{d-2}I
\end{align*}
By the similar argument we have
 $I^2 \cap J_{i} =J_{i} I$ for any $0 \leq i \leq d-3$ .
\end{proof}

\begin{Lemma}\label{l2}
Assume $R$ is Cohen-Macaulay. Let $I$ be an $R$-ideal which satisfies $\ell (I) =d \geq 3$, the $G_d$ condition, the $AN_{d-2}^-$, $J=(x_1, ... ,  x_d)$ general minimal reduction of $I$ and $\depth(R/I) \geq \min\lbrace 1,\dim R/I \rbrace$. Then $\depth(R^{1} /IR^{1} ) \geq\min\lbrace 1,\dim (R^{1} /IR^{1})  \rbrace$, where $R^{1}=R/J_{1} :I$ and $IR^{1}$ is the image of $I$ in the quotient ring $R^{1}$.
\end{Lemma}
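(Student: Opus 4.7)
The plan is a double application of the depth lemma on two short exact sequences furnished by Remark \ref{R1}.

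First I would exploit the structure of $J_1 : I$ as a geometric $1$-residual intersection. Under our hypotheses, Remark \ref{R1} gives $(J_1:I) \cap I = J_1$, and because $d \geq 3$ we have $1 \leq d-2$, so the Artin-Nagata property $AN_{d-2}^-$ applies at the $1$-residual level and forces $R^1 = R/(J_1:I)$ to be Cohen-Macaulay of dimension $d-1$. Since $x_1$ is a general element of $I$ (and $I$ has positive grade in our setup), $x_1$ is a nonzerodivisor on $R$, so $R/J_1$ is also Cohen-Macaulay of dimension $d-1$. The depth lemma applied to
\[
0 \to (J_1:I)/J_1 \to R/J_1 \to R/(J_1:I) \to 0
\]
then yields $\depth (J_1:I)/J_1 \geq \min(d-1,(d-1)+1)=d-1$.

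The main step is to write down the second sequence
\[
0 \to (J_1:I)/J_1 \to R/I \to R/(I+(J_1:I)) \to 0,
\]
whose injectivity is exactly the equality $(J_1:I) \cap I = J_1$, and whose cokernel is $R^1/IR^1$. The depth lemma gives
\[
\depth R^1/IR^1 \;\geq\; \min\bigl(\depth (J_1:I)/J_1 - 1,\; \depth R/I\bigr) \;\geq\; \min(d-2,\, \depth R/I).
\]
To finish, I would split on $\dim R/I$. If $\dim R/I = 0$ then $R^1/IR^1$ is $0$-dimensional and the bound $\min\{1,\dim R^1/IR^1\}=0$ is trivial; if $\dim R/I \geq 1$ then the hypothesis delivers $\depth R/I \geq 1$, and combined with $d-2 \geq 1$ we obtain $\depth R^1/IR^1 \geq 1 \geq \min\{1,\dim R^1/IR^1\}$, as required.

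The only nontrivial checks are the applicability of Remark \ref{R1} and the identification $(J_1:I) \cap I = J_1$ that makes the second sequence exact; once these are in hand, the argument is a routine chase through the depth lemma with no serious obstacle.
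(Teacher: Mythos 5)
Your overall strategy --- two applications of the depth lemma, first to compare $(J_1:I)/J_1$ with $R/J_1$ and $R/(J_1:I)$, then to the sequence $0 \to (J_1:I)/J_1 \to R/I \to R^1/IR^1 \to 0$ whose exactness comes from $(J_1:I)\cap I = J_1$ --- is exactly the skeleton of the paper's proof. But there is a genuine gap at the very first step: you assert that ``$I$ has positive grade in our setup,'' and use this to conclude that $x_1$ is a nonzerodivisor and hence that $R/J_1$ is Cohen--Macaulay of dimension $d-1$. The hypotheses of the lemma do not include $\grade(I)\geq 1$, and none of $\ell(I)=d$, $G_d$, or $AN_{d-2}^-$ forces it: the $G_d$ condition only requires $I_{\frak p}=0$ at minimal primes of $V(I)$ of height $0$, which is compatible with $I$ being contained in a minimal (hence associated) prime of the Cohen--Macaulay ring $R$. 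Indeed, the proof of Theorem \ref{P3} in this paper explicitly splits into the cases $\grade(I)\geq 1$ and $\grade(I)=0$, so the authors regard $\grade(I)=0$ as a live possibility. If $x_1$ is a zerodivisor, $\dim R/J_1 = d$ and $\depth R/J_1$ is not $d-1$ in general, so your first application of the depth lemma does not deliver $\depth\bigl((J_1:I)/J_1\bigr)\geq d-1$, and the rest of the chain collapses.

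The paper repairs exactly this point by first passing to $R^0 = R/(0:I)$, which is Cohen--Macaulay of dimension $d$ with $\grade(IR^0)\geq 1$, so that $\depth R/(J_1+0:I)=d-1$; it then runs your first sequence in the form $0 \to (J_1:I)/(J_1+0:I) \to R/(J_1+0:I) \to R/(J_1:I) \to 0$, transports the resulting depth bound via the isomorphism $(J_1:I+I)/(0:I+I)\cong (J_1:I)/(J_1+0:I)$ (which uses $(0:I+I)\cap(J_1:I)=0:I+J_1$), and inserts the extra sequence $0\to 0:I \to R/I \to R/(0:I+I)\to 0$ together with $\depth(0:I)=d$ to get $\depth R/(0:I+I)\geq 1$ before concluding. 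When $0:I=0$ your argument coincides with the paper's, so the fix is to carry the module $0:I$ through each of your sequences as above rather than assuming it vanishes.
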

\begin{proof}
If $I$ is an $\frak{m}$-primary ideal the assertion is clear, so we assume that $\dim R/I \geq 1$.
Set $R^{0} = R/0 : I$. Then $R^{0}$ is Cohen-Macaulay since $I$ satisfies ${AN_{d-2}^{-}}$. Note that $\dim R^{0}=\dim R = d$, $\grade (IR^{0}) \geq 1$, $IR^{0}$ still satisfies $G_d$ condition, ${AN_{d-2}^{-}}$ , $\ell(IR^{0}) = \ell(I) = d$ (see for instance \cite{U}). Hence $\depth(R^{0}/x_{1}R^{0})=d-1$  or $\depth(R/J_1+0:I)=d-1$.
By the exact sequence
\[0 \longrightarrow J_1:I/J_1 +0:I \longrightarrow R/J_1 +0:I \longrightarrow R/J_1:I \longrightarrow 0\]
one has $\depth (J_1:I/J_1 +0:I) \geq d-1$.
Since
\[ J_1: I +I/ 0:I+I =J_1: I +I+0:I/ 0:I+I \cong J_1 : I / (0:I +I) \cap J_1 : I =J_1 :I /0:I +J_1\]
we have $\depth (J_1: I +I/ 0:I+I)\geq d-1.$
Hence by the exact sequence
\[ 0  \longrightarrow  0 : I \longrightarrow R/I \longrightarrow R/(0 : I + I) \longrightarrow 0,\]
and since $\depth(0 :I) =d$ (see \cite{U}), we have $\depth (R/(0 : I + I)) \geq 1$.
Now by the following exact sequence
\[ 0 \longrightarrow J_1: I +I/ 0:I+I \longrightarrow R/0:I+I \longrightarrow R/J _1: I +I \longrightarrow 0\]
we have $\depth (R/J _1: I +I) \geq 1$. Therefore $\depth(R^{1} /IR^{1} ) \geq\min\lbrace 1,\dim (R^{1} /IR^{1}) \rbrace$.
\end{proof}

\begin{Remark}
Assume $R$ is Cohen-Macaulay of dimension $d$. Let $I$ be an $\frak{m}$-primary ideal of $R$ and $I^2 \cap J=JI$, for a minimal reduction $J$ of $I$.  By formula of Huckaba and Marley \cite[4.2]{HM}, we have $e_1(I) \geq  \lambda (I/J) +\lambda (I^2 / JI)$, and if  $e_1(I) = \lambda (I/J) +1$ then $\depth(G(I)) \geq d-1$.
\end{Remark}

\begin{Theorem}\label{p1}
Assume that $R$ is Cohen-Macaulay. Let $I$ be  a non $\frak{m}$-primary  $R$-ideal which satisfies $\ell (I)=2$, the $G_2$ condition, the $AN_{0}^-$ and $\depth(R/I)\geq \min\lbrace 1, \dim R/I \rbrace$. Then for a general minimal reduction $J=(x_1, x_2)$ of $I$, one has that $j_1(I) \geq \lambda (I/J) +\lambda (I^2/JI) + \lambda [R/(J_{1} :_{R} I+(0 :_{R} I+I) :_{R} {\frak{m}}^{\infty})]$.
\end{Theorem}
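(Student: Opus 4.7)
The plan is to pass to the one-dimensional residual quotient $R^{1}=R/(J_{1}:I)$ and reduce the inequality to the classical dimension-one $\frak{m}$-primary setting, where the Huckaba--Marley formula refines Northcott. By Remark~\ref{R1} and the $AN_{0}^{-}$ hypothesis, $R^{1}$ is a one-dimensional Cohen--Macaulay local ring, one has $J_{1}:I=J_{1}:I^{\infty}$ and $(J_{1}:I)\cap I=J_{1}$, and $IR^{1}$ is $\frak{m}R^{1}$-primary with $x_{2}$ a minimal reduction.

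The first step is to transfer the two length terms on the right-hand side down to $R^{1}$. I would prove
\[
\lambda(I/J)=\lambda(IR^{1}/x_{2}R^{1})\quad\text{and}\quad\lambda(I^{2}/JI)=\lambda\bigl((IR^{1})^{2}/x_{2}IR^{1}\bigr).
\]
The first identity reduces to $I\cap(x_{2}R+J_{1}:I)=J$: writing $i=x_{2}r+s$ in the intersection with $s\in J_{1}:I$ forces $s=i-x_{2}r\in I\cap(J_{1}:I)=J_{1}$ by Remark~\ref{R1}, so $i\in J$. The second identity reduces to $I^{2}\cap(x_{2}I+J_{1}:I)=JI$, and here the crucial input is the sharper content of Lemma~\ref{l1}: its proof, via \cite[Lemma~3.2(f)]{PX} and Remark~\ref{R1}, actually establishes $I^{2}\cap(J_{1}:I)=J_{1}I$, not merely $I^{2}\cap J_{1}=J_{1}I$, after which the same argument closes the identity.

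Next, I would apply the classical one-dimensional Huckaba--Marley formula in $R^{1}$. Since $IR^{1}$ is $\frak{m}R^{1}$-primary and $(x_{2})$ is a reduction, the telescoping identity $e_{1}(IR^{1})=\sum_{n\geq 0}\lambda\bigl((IR^{1})^{n+1}/x_{2}(IR^{1})^{n}\bigr)$ holds; dropping the non-negative tail with $n\geq 2$ and using the length identifications above yields
\[
e_{1}(IR^{1})\;\geq\;\lambda(I/J)+\lambda(I^{2}/JI).
\]

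Finally, I would invoke Xie's formula \cite[Theorem~3.2, Corollary~3.3]{X} relating $\lambda(I^{n+1}/JI^{n})$ to $P_{I}(n)-H_{I}(n)$ to extract the comparison
\[
j_{1}(I)\;\geq\;e_{1}(IR^{1})+\lambda\bigl[R/(J_{1}:_{R}I+(0:_{R}I+I):_{R}\frak{m}^{\infty})\bigr],
\]
and combine it with the previous inequality to conclude. The main obstacle is this last comparison: the form of Northcott recorded in the introduction absorbs the dimension-one Northcott for $IR^{1}$ into $\lambda(I/J)$, so one must revisit the proof of \cite[Theorem~3.2]{X} in order to isolate $e_{1}(IR^{1})$ in place of $\lambda(I/J)$ before the Huckaba--Marley refinement can be inserted cleanly.
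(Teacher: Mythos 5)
Your steps (1)--(3) are sound and in fact reproduce the first part of the paper's argument: the paper also passes to $R^{1}=R/(J_{1}:I^{\infty})$, identifies
$\sum_{n\ge 0}\lambda(I^{n+1}/JI^{n})-\sum_{n\ge 0}\lambda(I^{n+1}\cap J_{1}/JI^{n}\cap J_{1})$ with $\sum_{n\ge 0}\lambda(I^{n+1}R^{1}/JI^{n}R^{1})$, and keeps only the $n=0,1$ terms, using exactly the facts you cite ($(J_{1}:I)\cap I=J_{1}$ and $I^{2}\cap(J_{1}:I^{\infty})=J_{1}I$ from \cite[Lemma 3.2(f)]{PX}) to see that these two terms are $\lambda(I/J)$ and $\lambda(I^{2}/JI)$.

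The genuine gap is your step (4). The inequality
\[
j_{1}(I)\;\geq\;e_{1}(IR^{1})+\lambda\bigl[R/(J_{1}:_{R}I+(0:_{R}I+I):_{R}\frak{m}^{\infty})\bigr]
\]
is not a citation away from \cite[Theorem 3.2, Corollary 3.3]{X}; it \emph{is} the theorem, and you explicitly defer it (``one must revisit the proof of [X, Theorem 3.2]'') without saying how. Concretely, Xie's formula writes $j_{1}(I)$ as $e_{1}(IR^{1})$ plus the terms
$\lambda(R/(I+J_{1}:I))-\lambda(H^{0}_{\frak{m}}(R/(I+0:I)))$ plus $\sum_{n\ge 1}[\lambda(\widetilde{L_{n}^{0}})-\lambda(L_{n}^{0})+\lambda(N_{n}^{0})]$, and what must be shown is (a) that the middle difference is at least $\lambda[R/(J_{1}:I+(0:I+I):\frak{m}^{\infty})]$, and (b) that each $\lambda(\widetilde{L_{n}^{0}})-\lambda(L_{n}^{0})\geq 0$. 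Neither is routine: the paper proves (a) by a chain of length identities resting on $(J_{1}:I)\cap(J:\frak{m}^{\infty})=J_{1}$ together with the exact sequence $0\to 0:I\to R/I\to R/(0:I+I)\to 0$ and $\depth(0:I)=2$, and proves (b) by exhibiting a surjection of $I^{n+1}:_{I^{n}\cap J_{1}}\frak{m}^{\infty}$ onto a submodule of $\widetilde{L_{n}^{0}}$ whose kernel is computed to be exactly the denominator of $L_{n}^{0}$. Without (a) and (b) your argument establishes only $e_{1}(IR^{1})\geq\lambda(I/J)+\lambda(I^{2}/JI)$, which does not bound $j_{1}(I)$ and omits the residual-intersection length term entirely.
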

\begin{proof}
By \cite[Corollary 3.2]{X} and Remark \ref{R1}, we have
\begin{align*}
 j_1(I) &= \sum\limits_{n = 0}^\infty  \lambda({{{I^{n + 1}} }}/{{J{I^n} }}) - \sum\limits_{n = 0}^\infty  \lambda({{{I^{n + 1}} \cap J_{1}}}/{{J{I^n} \cap J_{1}}})\\
&+  \lambda (R/(I+J_{1}: I))- \lambda ({H_{\frak{m}}^0}(R/I+ 0 : I)) \\
& +\sum\limits_{n = 1}^\infty [ \lambda({ \widetilde  {L_n^{0}} })-\lambda({{L_n^{0}} })+ \lambda({{N_n^{0}} })]
\end{align*}
where
\[ \widetilde{ L_{n}^{0}}=I^n \cap J_1 / [I^{n+1} \cap J_1 +x_1 I^{n-1}]\]
\[L_{n}^{0}=I^{n+1}:_{I^n \cap J_1}{\frak{m}}^{\infty}  /[I^{n+1} \cap J_1 +x_1(I^n:_{I^{n-1}}{\frak{m}}^{\infty} )]\]
\[N_{n}^{0}= (I^n \cap J_1 +I^{n+1}):_{I^n} {\frak{m}}^{\infty} / [I^n \cap J_1 + (I^{n+1} :_{I^n} {\frak{m}}^{\infty} )]. \]
Assume $R^{1}= R/(J_{1} : I^{\infty})$ and $IR^{1}=(I+J_{1} : I^{\infty})/(J_{1}: I^{\infty})$. By the fact that $\lambda({{{I^{n + 1}} }}/{{J{I^n} }})<\infty $, $\lambda({{{I^{n + 1}}\cap J_{1}}}/{{J{I^n} \cap J_{1}}})=\lambda({{{I^{n + 1}}}} \cap (JI^{n} +J_{1} : I^{\infty})/{{J{I^n}}})<\infty$ (see \cite[Lemma 4.6]{PX}) one has
\begin{align*}
\lambda ({I}^{n+1}R^{1}/ JI^{n}R^{1}) &=\lambda ((I^{n+1}+J_{1} : I^{\infty})/(JI^{n} +J_{1} : I^{\infty}))\\
&= \lambda (I^{n+1} /JI^{n} +(I^{n+1} \cap J_{1} : I^{\infty}))\\
&= \lambda({{{I^{n + 1}} }}/{{J{I^n} }})- \lambda({{{I^{n + 1}} \cap J_{1}}}/{{J{I^n} \cap J_{1}}})
\end{align*}
Since $\dim(R/I) \geq 1$ ( $I$ is a non $\frak{m}$-primary ideal) by hypothesis $\depth(R/I) \geq 1$, then by \cite[Lemma 3.2(f)]{PX} $I^2 \cap J_{1} : I^{\infty}= J_1 I$. Now by summing these equations over all $n \geq 1$ and using the fact that $\lambda (IR^{1}/JR^{1})=\lambda (I/J)$, $\lambda ({I}^{2}R^{1}/JIR^{1})=\lambda (I^2/(JI+ (I^2 \cap J_{1} : I^{\infty})))=\lambda(I^2/JI)$, we see that
\begin{align*}
\sum\limits_{n = 0}^\infty  \lambda({{{I^{n + 1}} }}/{{J{I^n} }}) - \sum\limits_{n = 0}^\infty  \lambda({{{I^{n + 1}} \cap J_{1}}}/{{J{I^n} \cap J_{1}}}) = \sum\limits_{n = 0}^\infty \lambda ({I}^{n+1}R^{1}/ JI^{n}R^{1})\\
=\lambda (I/J) +\lambda (I^2/JI) +  \sum\limits_{n = 2}^\infty \lambda (I^{n+1}/(JI^{n} +(I^{n+1} \cap J_{1} : I^{\infty}))).
\end{align*}
Claim 1:
\[\lambda (R/(I+J_{1}: I))- \lambda ({H_{\frak{m}}^0}(R/I+ 0 : I)) \geq  \lambda [R/(J_{1} :_{R} I+(0 :_{R} I+I) :_{R} {\frak{m}}^{\infty})].\]
 Proof of Claim 1: We can write
\begin{align*}
\lambda (R/(I+J_{1}: I))- \lambda ({H_{\frak{m}}^0}(R/I+ 0 : I)) &= \lambda (R/(I+J_{1}: I))- \lambda ({H_{\frak{m}}^0}(R/I)) \\
& + \lambda ({H_{\frak{m}}^0}(R/I))- \lambda ({H_{\frak{m}}^0}(R/I+ 0 : I))
\end{align*}
by applying  the long exact sequence of local cohomology on the following exact sequence
\[0\longrightarrow  0:I\longrightarrow R/I\longrightarrow R/(0 : I+I) \longrightarrow 0,\]
and since $\depth(0 :I) =2$ (see \cite{U}), we have
\[\lambda ({H_{\frak{m}}^0}(R/I))- \lambda ({H_{\frak{m}}^0}(R/I+ 0 : I)) \geq 0.\]
To complete the proof of Claim 1, we need to show that
\[\lambda (R/(I+J_{1}: I))- \lambda ({H_{\frak{m}}^0}(R/I)) = \lambda [R/(J_{1} :_{R} I+(0 :_{R} I+I) :_{R}{\frak{m}}^{\infty})]. \]
 One has\\
$
  \lambda (R/(I+J_{1}: I))- \lambda ({H_{\frak{m}}^0}(R/I)) \\
=[\lambda (R/(J+J_{1}: I))- \lambda(I/J)] -[\lambda(J:{\frak{m}}^{\infty}/J)-\lambda(I/J)] \\
=\lambda (R/(J+J_{1}: I))- \lambda(J:{\frak{m}}^{\infty}/J)\\
=\lambda (R/(J_{1}:I+J:{\frak{m}}^{\infty} ))+\lambda(J_{1}:I+J:{\frak{m}}^{\infty}/J+J_{1}: I)- \lambda(J:{\frak{m}}^{\infty}/J)\\
=\lambda (R/(J_{1}:I+J:{\frak{m}}^{\infty} ))+\lambda(J:{\frak{m}}^{\infty}/(J+J_{1}: I) \cap J:{\frak{m}}^{\infty} )- \lambda(J:{\frak{m}}^{\infty}/J)\\
=\lambda (R/(J_{1}:I+J:{\frak{m}}^{\infty} ))-\lambda((J+J_{1}: I) \cap J:{\frak{m}}^{\infty}/J)\\
=\lambda (R/(J_{1}:I+J:{\frak{m}}^{\infty} ))-\lambda(J/J)\\
=\lambda (R/(J_{1}:I+J:{\frak{m}}^{\infty} ))\\
=\lambda (R/(J_{1}:I+0:I+I:{\frak{m}}^{\infty} ))\\$\\
where the first equality holds because,
\begin{align*}
\lambda (R/(I+J_{1}: I))&=\lambda (R/(J+J_{1}: I))-\lambda(I+J_{1}: I/J+J_{1}: I)\\
&=\lambda (R/(J+J_{1}: I))-\lambda(I/J+(J_{1}: I \cap I))\\
&=\lambda (R/(J+J_{1}: I))-\lambda(I/J).\\
\end{align*}
Since $\lambda(I/J)< \infty$, we also have
\[\lambda ({H_{\frak{m}}^0}(R/I))=\lambda(I:{\frak{m}}^{\infty}/I)=\lambda(J:{\frak{m}}^{\infty}/I)=\lambda(J:{\frak{m}}^{\infty}/J)-\lambda(I/J)\]
the sixth equality follows by \cite[(9)]{X} that $(J_1:I) \cap (J:{\frak{m}}^{\infty})=J_1$.\\
 \\
 Claim 2:
 $\lambda({ \widetilde  {L_n^{0}} })-\lambda({{L_n^{0}} }) \geq 0$\\
 Proof of Claim 2:
 There is a map
 \[ I^{n+1} :_{I^{n} \cap J_1}  m^{\infty} \longrightarrow I^{n} \cap J_1 /I^{n+1} \cap J_1 +x_1 I^{n-1} \]
 with kernel
 \begin{align*}
[I^{n+1} :_{I^{n} \cap J_1}{\frak{m}}^{\infty}] \cap[ I^{n+1} \cap J_1 +J_1 I^{n-1}]
&=I^{n+1} \cap J_1 +[I^{n+1} :_{I^{n} \cap J_1}{\frak{m}}^{\infty}] \cap J_1 I^{n-1}\\
&=I^{n+1} \cap J_1 +[J_1 I^n :_{I^{n} \cap J_1}{\frak{m}}^{\infty}] \cap J_1 I^{n-1}\\
&=I^{n+1} \cap J_1 +x_1 ( I^n :_{I^{n-1}}{\frak{m}}^{\infty}).
 \end{align*}
 We see that $\lambda({ \widetilde  {L_n^{0}} })-\lambda({{L_n^{0}} }) \geq 0.$
 Then by using the formula for $j_1(I)$, Claim 1 and Claim 2 we have $j_1(I) \geq \lambda (I/J) +\lambda (I^2/JI) + \lambda [R/(J_{1} :_{R} I+(0 :_{R} I+I) :_{R}{\frak{m}}^{\infty})]$, as required.
\end{proof}

 \begin{Theorem}\label{P2}
Assume $R$ is Cohen-Macaulay. Let $I$ be a non $\frak{m}$-primery $R$-ideal which satisfies $\ell(I)=d$, the $G_d$ condition, the $AN_{d-2}^-$ and $\depth(R/I) \geq \min\lbrace 1,\dim R/I \rbrace$. Then for a general minimal reduction $J=(x_1, ... ,  x_d)$ of $I$, one has that $j_1(I) \geq \lambda (I/J) +\lambda (I^2/JI) +\lambda [R/(J_{d-1} :_{R} I+(J_{d-2} :_{R} I+I) :_{R} {\frak{m}}^{\infty})]$.
\end{Theorem}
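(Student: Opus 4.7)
The plan is to repeat the proof of Theorem \ref{p1} in the higher-dimensional setting, substituting $J_{d-1}$ for $J_1$ and $J_{d-2}$ for the zero ideal throughout. The starting point is the $d$-dimensional analogue of \cite[Corollary 3.2]{X} used in the $d=2$ proof; this expresses $j_1(I)$ as a sum of a ``main'' piece $\sum_n [\lambda(I^{n+1}/JI^n) - \lambda((I^{n+1} \cap J_{d-1})/(JI^n \cap J_{d-1}))]$, a ``middle'' correction $\lambda(R/(I + J_{d-1}:I)) - \lambda(H^0_\mathfrak{m}(R/(I + J_{d-2}:I)))$, and an ``error'' sum of the natural analogues $\widetilde{L_n^{d-2}}$, $L_n^{d-2}$, $N_n^{d-2}$ of the modules in Theorem \ref{p1}, now defined with $J_{d-1}$ and $x_{d-1}$ in place of $J_1$ and $x_1$. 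I would then bound each piece from below in turn.

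For the main piece I would pass to the quotient $R^{d-1} = R/(J_{d-1}:I^\infty)$, which by $AN_{d-2}^-$ is a one-dimensional Cohen-Macaulay ring in which $IR^{d-1}$ is $\mathfrak{m}R^{d-1}$-primary. Remark \ref{R1} gives $\lambda(I^{n+1}R^{d-1}/JI^n R^{d-1}) = \lambda(I^{n+1}/JI^n) - \lambda((I^{n+1} \cap J_{d-1})/(JI^n \cap J_{d-1}))$. The $n=0$ contribution is $\lambda(I/J)$, and by Lemma \ref{l1} (which yields $I^2 \cap J_{d-1} = J_{d-1}I$) the $n=1$ contribution is $\lambda(I^2/JI)$; all $n \geq 2$ contributions are non-negative, exactly as in the $d=2$ argument.

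For the middle correction I would mimic Claim~1 of Theorem \ref{p1}. The $AN_{d-2}^-$ hypothesis forces $R/(J_{d-2}:I)$ to be Cohen-Macaulay of dimension $2$, so the depth lemma applied to $0 \to (J_{d-2}:I)/J_{d-2} \to R/J_{d-2} \to R/(J_{d-2}:I) \to 0$ (together with Remark \ref{R1} which identifies $(J_{d-2}:I) \cap I = J_{d-2}$) yields the analogue of the non-negativity step $\lambda(H^0_\mathfrak{m}(R/I)) - \lambda(H^0_\mathfrak{m}(R/(I + J_{d-2}:I))) \geq 0$. The ensuing chain of equalities from Theorem \ref{p1}, with $J_{d-1}$ replacing $J_1$ and $J_{d-2}$ replacing $0$, then produces
\[\lambda(R/(I + J_{d-1}:I)) - \lambda(H^0_\mathfrak{m}(R/I)) = \lambda[R/(J_{d-1}:I + (J_{d-2}:I + I):\mathfrak{m}^\infty)].\]
For the error terms, the same kernel computation as in Claim~2 of Theorem \ref{p1} (with $J_{d-1}$ and $x_{d-1}$ in the roles of $J_1$ and $x_1$) gives $\lambda(\widetilde{L_n^{d-2}}) - \lambda(L_n^{d-2}) \geq 0$, and $\lambda(N_n^{d-2}) \geq 0$ by construction. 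Summing the three contributions gives the desired lower bound.

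The main obstacle I anticipate is the generalization of the colon-ideal identity $(J_1:I) \cap (J:\mathfrak{m}^\infty) = J_1$ from \cite[(9)]{X}, which is what collapses the penultimate step of Claim~1 in the $d=2$ case. The required analogue $(J_{d-1}:I) \cap (J:\mathfrak{m}^\infty) = J_{d-1}$ should follow by combining the $G_d$ and $AN_{d-2}^-$ hypotheses with Remark \ref{R1}, but bookkeeping the intersections with successive residual ideals is delicate. The depth hypothesis $\depth(R/I) \geq \min\{1, \dim R/I\}$ enters only through Lemma \ref{l1}, which is precisely what upgrades the lower bound from $\lambda(I/J)$ to $\lambda(I/J) + \lambda(I^2/JI)$.
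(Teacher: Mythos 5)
Your plan is a genuinely different route from the paper's. The paper does \emph{not} redo the length computation in dimension $d$: it argues by induction on $d$, with Theorem~\ref{p1} as the base case. For $d\geq 3$ it passes to $R^{1}=R/(J_1:I)$, a $(d-1)$-dimensional Cohen--Macaulay ring in which $IR^{1}$ again satisfies $G_{d-1}$, $AN_{d-3}^-$ and $\ell(IR^{1})=d-1$; the key inputs are that $j_1(I)=j_1(IR^{1})$ (\cite[Proposition 2.2]{PX2}), that Lemma~\ref{l2} pushes the hypothesis $\depth(R/I)\geq\min\{1,\dim R/I\}$ down to $R^{1}$, and that Lemma~\ref{l1} identifies $\lambda(I^{2}R^{1}/JIR^{1})=\lambda(I^2/(JI+(I^2\cap J_1:I)))=\lambda(I^2/JI)$ so the second-order term survives the reduction. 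This makes the induction essentially formal and confines all the delicate colon-ideal manipulation to the two-dimensional case.

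Your direct approach could in principle work --- it is close in spirit to how Xie proves the general Northcott inequality --- but as written it has a real gap: everything hinges on \cite[Corollary 3.2]{X} having, for general $d$, \emph{exactly} the shape you posit, with a single middle correction $\lambda(R/(I+J_{d-1}:I))-\lambda(H^0_{\frak{m}}(R/(I+J_{d-2}:I)))$ and error terms indexed only by $d-2$, all computed in $R$ itself. You do not verify this, and in the general formula the correction and error terms are naturally expressed in the residual ring $R^{d-2}=R/(J_{d-2}:I^{\infty})$ rather than in $R$, so translating them back into the colon ideals $J_{d-1}:I$ and $J_{d-2}:I$ of $R$ is not a mechanical index substitution. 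You also flag but do not prove the identity $(J_{d-1}:I)\cap(J:{\frak{m}}^{\infty})=J_{d-1}$, which is exactly the step that collapses your Claim~1 chain; in the paper only the $d=2$ instance $(J_1:I)\cap(J:{\frak{m}}^{\infty})=J_1$ from \cite[(9)]{X} is ever needed. You correctly identify where the depth hypothesis and Lemma~\ref{l1} enter (upgrading the bound by $\lambda(I^2/JI)$ via $I^2\cap J_{d-1}=J_{d-1}I$), but to make your argument complete you would either have to establish the general-$d$ formula and the colon identity in the form you use them, or fall back on the paper's inductive reduction, which avoids both.
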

\begin{proof}
 We prove the theorem by induction on $d$. The case $d=2$ has been proved in Theorem \ref{p1}. Let $d \geq 3$ and assume that the theorem holds for $d-1$.
 Let $ R^{1}=R/J_1 :I$, where $J_1=(x_1)$. Observe that $R^{1}$ is a Cohen-Macaulay ring of dimension $d-1$ and $\ell (IR^{1})=d-1$. Also, $IR^{1}=I+J_1:I/J_1:I$ satisfies $G_{d-1}$ and $AN_{d-3}^-$ (see \cite[Lemma 3.2]{PX} and \cite{U}). By Lemma \ref{l2}, $\depth(R^{1}/IR^{1})\geq\min\lbrace \dim R^{1}/ IR^{1}, 1\rbrace$ and  by \cite[Propositon 2.2]{PX2} \ $J_1(I)=J_1(IR^{1})$ and so we have
 \begin{align*}
J_1(I) =J_1(IR^{1}) & \geq  \lambda(IR^{1}/JR^{1})+  \lambda({I^{2}R^{1}}/JR^{1} IR^{1})\\
& + \lambda [R^{1}/(x_2 , ... , x_{d-1})R^{1} :_{R^{1}} IR^{1}\\
&+(x_2 , ... , x_{d-2})R^{1} :_{R^{1}} IR^{1}+IR^{1}) :_{R^{1}} {\frak{m}}^{\infty})]\\
&= \lambda(I/J)+ \lambda(I^2/(JI+(I^2 \cap J_1:I)) \\
&+\lambda [R/(J_{d-1} :_{R} I+(J_{d-2} :_{R} I+I) :_{R} {\frak{m}}^{\infty})]\\
&= \lambda(I/J)+ \lambda(I^2/JI) +\lambda [R/(J_{d-1} :_{R} I+(J_{d-2} :_{R} I+I) :_{R}{\frak{m}}^{\infty})]
 \end{align*}
that the last equality follow by Lemma \ref{l1}.
 \end{proof}

 \begin{Theorem}\label{P3}
Assume $R$ is Cohen-Macaulay. Let $I$ be a non $\frak{m}$-primary $R$-ideal which satisfies $\ell (I)=d$, the $G_d$ condition, the $AN_{d-2}^-$ and $\depth(R/I) \geq\min\lbrace 1,\dim R/I \rbrace$. Then for a general minimal reduction $J=(x_1, ... ,x_d)$ of $I$, with  $j_1(I) = \lambda (I/J) +\lambda [R/(J_{d-1} :_{R} I+(J_{d-2} :_{R} I+I) :_{R} {\frak{m}}^{\infty})]+1 $, we have $\depth(G(I))\geq d -1$. Moreover, $r_J(I) \leq 2$.
\end{Theorem}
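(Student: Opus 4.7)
The strategy is to couple the sharp lower bound of Theorem \ref{P2} with Sally's classical theorem for $\frak{m}$-primary ideals from \cite{S}, and to propagate the primary case up to our setting by induction on $d$, using the one-step reduction $R^1=R/(J_1:I)$ exactly as in the proof of Theorem \ref{P2}.

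The first step is to extract information from the equality. Combining the hypothesis with the lower bound of Theorem \ref{P2} gives $\lambda(I^2/JI)\leq 1$. If $\lambda(I^2/JI)=0$, then $I^2=JI$, so $r_J(I)\leq 1$, and Xie's theorem recalled in the introduction yields that $G(I)$ is Cohen--Macaulay; both conclusions follow at once. So we may assume $\lambda(I^2/JI)=1$, which is the substantive case.

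Next I would argue by induction on $d$. For the inductive step, pass from $R$ to $R^1=R/(J_1:I)$: Lemma \ref{l2} yields $\depth(R^1/IR^1)\geq\min\{1,\dim(R^1/IR^1)\}$, while $\ell(IR^1)=d-1$, the $G_{d-1}$ condition and $AN_{d-3}^-$ transfer by \cite[Lemma 3.2]{PX}. By \cite[Proposition 2.2]{PX2}, $j_1(IR^1)=j_1(I)$; and, as in the proof of Theorem \ref{P2}, Lemma \ref{l1} together with Remark \ref{R1} identifies the three length summands on the right-hand side of the inequality computed in $R^1$ with $\lambda(I/J)$, $\lambda(I^2/JI)$ and $\lambda[R/(J_{d-1}:_R I+(J_{d-2}:_R I+I):_R \frak{m}^\infty)]$ respectively. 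Hence the equality hypothesis translates verbatim to $IR^1$, and the inductive hypothesis gives $\depth G(IR^1)\geq d-2$ together with $r_J(IR^1)\leq 2$. The base case is $d=2$: then $R^1$ is one-dimensional Cohen--Macaulay and $IR^1$ is $\frak{m}R^1$-primary (the case $\lambda(I^2/JI)=1$ excludes $I^2\subseteq J_1$, hence $IR^1\neq 0$, since otherwise $(x_1)$ would be a reduction of $I$, contradicting $\ell(I)=2$), so $j_1(IR^1)=e_1(IR^1)$ and the equality reads $e_1(IR^1)=\lambda(IR^1/JR^1)+1$; Sally's theorem \cite{S} then delivers $r_J(IR^1)\leq 2$, with depth automatic in dimension one.

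The remaining task is to lift the two conclusions from $R^1$ back to $R$. For the reduction number, $r_J(IR^1)\leq 2$ gives $I^3\subseteq JI^2+(J_1:I^\infty)$; intersecting with $I^3$ and invoking $I^3\cap(J_1:I^\infty)=J_1I^2$ --- a consequence of $\depth(R/I)\geq 1$ via \cite[Lemma 3.2(f)]{PX} --- yields $I^3=JI^2$, i.e., $r_J(I)\leq 2$. The main obstacle, and the technical heart of the argument, is transferring the depth bound: one must show that the initial form $x_1^*\in G(I)_1$ is $G(I)$-regular, so that $\depth G(IR^1)\geq d-2$ promotes to $\depth G(I)\geq d-1$. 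I would verify the Valabrega--Valla criterion $x_1R\cap I^{n+1}=x_1I^n$ for every $n\geq 0$: the cases $n=0,1$ reduce to Lemma \ref{l1} (in particular $J_1\cap I^2=J_1I$), while for $n\geq 2$ one exploits $r_J(I)\leq 2$, the regularity of the sequence $x_1,\dots,x_d$ in $R$, and the generality of $x_1$ to control $x_1R\cap JI^n$ via a standard Sally--Huckaba--Marley style argument.
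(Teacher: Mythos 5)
Your overall architecture coincides with the paper's: apply Theorem \ref{P2} to force $\lambda(I^2/JI)\le 1$, dispose of the case $\lambda(I^2/JI)=0$ via $r_J(I)\le 1$, and handle $\lambda(I^2/JI)=1$ by induction on $d$ through $R^1=R/(J_1:I)$, using Lemma \ref{l2} and the invariance $j_1(I)=j_1(IR^1)$. The differences are in the two places where the real work lies, and there your plan has genuine gaps.

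First, the regularity of $x_1^*$ on $G(I)$. This is the technical heart, as you say, but your sketch of it is circular as written: you propose to verify the Valabrega--Valla condition for $n\ge 2$ ``exploiting $r_J(I)\le 2$,'' while your derivation of $r_J(I)\le 2$ rests on $I^3\cap(J_1:I^\infty)=J_1I^2$, which you attribute to \cite[Lemma 3.2(f)]{PX}. That lemma, as used in this paper, gives only the degree-two statement $I^2\cap(J_1:I^\infty)=J_1I$ (it needs $\depth(R/I)\ge 1$); the degree-three statement is, via Remark \ref{R1}, exactly the Valabrega--Valla condition at $n=2$ that you are trying to prove. The paper breaks this circle by importing the depth statement wholesale: \cite[Theorem 4.7]{PX} for the base case $d=2$ and \cite[Lemmas 4.8, 4.9]{PX} for the regularity of $x_1^*$ in the inductive step; only \emph{after} $\depth G(I)\ge d-1$ is in hand does it conclude $I^{n+1}\cap J_1=J_1I^n$ and hence $r_J(I)\le 2$ (the paper gets the latter from the vanishing of $\sum_{n\ge 2}\lambda(I^{n+1}/(JI^n+I^{n+1}\cap(J_1:I^\infty)))$ extracted from the $j_1$-formula, rather than from $r_J(IR^1)\le 2$, but the two routes are equivalent once the depth bound is known). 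The Sally--Huckaba--Marley argument you gesture at does not presuppose $r_J(I)\le 2$; carrying it over to the non-$\frak{m}$-primary, residual-intersection setting is precisely the content of the Polini--Xie results, and it is not a formality.

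Second, you nowhere treat the case $\grade(I)=0$. If $I$ consists of zerodivisors then $x_1a=0$ for some $a\ne 0$ forces $x_1^*a^*=0$ in $G(I)$, so $x_1^*$ is never $G(I)$-regular and your lifting step fails outright. The paper first passes to $R^0=R/(0:I)$, checks that all hypotheses persist with $\grade(IR^0)\ge 1$, and uses $\depth G(I)\ge\depth G(IR^0)$. Your base case $d=2$ (reducing to a one-dimensional $\frak{m}$-primary situation and invoking Sally) is a legitimate alternative to citing \cite[Theorem 4.7]{PX} for the reduction-number part, and your deduction that the equality forces $e_1(IR^1)=\lambda(IR^1/JR^1)+1$ is correct once one notes that all the nonnegative correction terms in the $j_1$-formula must vanish simultaneously; but the depth conclusion for $d=2$ again hinges on the unproven regularity of $x_1^*$, so the base case is not actually closed either.
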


\begin{proof}
By Theorem \ref{P2} we have
$j_1(I) \geq \lambda (I/J)+\lambda (I^2/JI) +\lambda [R/(J_{d-1} :_{R} I+(J_{d-2} :_{R} I+I) :_{R} {\frak{m}}^{\infty})]$
so if $\lambda(I^2/JI)=0$, then  $r(I) \leq 1$ and by  \cite[Theorem 3.8]{PX}, $G(I)$ is Cohen-Macaulay.
 If $\lambda(I^2/JI)=1$, then in this case we use induction on $d$ to prove $\depth G(I)\geq d-1$. If $d=2$, then by \cite[Theorem 4.7]{PX} we have $\depth(G(I))\geq 1$. Let $d\geq 3$ and assume that the theorem holds for $d-1$. Let $\grade(I) \geq 1$ and $ R^{1}=R/J_1 :I$, where $J_1=(x_1)$. Observe that $R^{1}$ is a Cohen-Macaulay ring of dimension $d-1$ and $\ell (IR^{1})=d-1$. Also $IR^{1}=I+J_1:I/J_1:I$ satisfies $G_{d-1}$ and $AN_{d-3}^-$ (see  \cite[Lemma 3.2]{PX}  and \cite{U}). By Lemma \ref{l2}, $\depth(R^{1}/IR^{1})\geq \min\lbrace\dim R^{1}/ IR^{1}, 1\rbrace$ and  by \cite[Lemma 3.2]{PX}  $\lambda(I^{2}R^{1}/JR^{1}IR^{1})=\lambda(I^2/(JI+(I^2 \cap J_1:I)))=\lambda(I^{2} /JI)=1$. Thus we have $\depth(G({IR^{1}}))\geq d -2$. By \cite[Lemma 4.8 and Lemma 4.9]{PX}, one has that $x_{1}^{*}$ is regular on $G(I)$. Since $\depth(G(IR^{1}))\geq d -2$ and $x_{1}^{*}$ is regular on $G(I)$, we have $\depth(G(I))\geq d -1$.
If $\grade(I)=0$, then by Lemma \ref{l2} all assumption still hold for the $R^{0}$ (i.e  $R^{0}$ is Cohen-Macaulay, $\dim R^{0}=\dim R=d$, $IR^{0}$ still satisfies $G_d$ condition, ${AN_{d-2}^{-}}$ , $\ell(IR^{0})=\ell(I)=d$) furthermore $\grade(IR^{0})\geq 1$ and $\depth(G(I))\geq \depth(G({IR^{0}}))$. So if $\grade(I)=0$ then $\depth(G(I))\geq d -1$.
If $j_1(I) = \lambda (I/J) +\lambda (I^2/JI) +\lambda [R/(J_{d-1} :_{R} I+(J_{d-2} :_{R} I+I) :_{R} m^{\infty})]$, then by using formula of $j_{1}(I)$ we have $ \sum\limits_{n=2}^\infty \lambda (I^{n+1}/(JI^{n} +(I^{n+1} \cap J_{1} : I^{\infty})) )=0 $.
If $\depth(G(I))\geq d -1$, then $I^{n+1} \cap  J_i =J_i I^n$ for every $n\geq 0$ and $0 \leq i \leq d-1$. Therefore we have $ \sum\limits_{n = 2}^\infty \lambda (I^{n+1}/JI^{n} )=0$ and so $r_J(I) \leq 2$.
\end{proof}

\begin{Example}
Let $R=k[\kern-0.15em[ x,y,z ]\kern-0.15em]$ be a polynomial ring where $k$ is a field and $\frak{q}=(x^4,xz,yz)$. Set $S=R/\frak{q}$ and let $I=(x, y)$ be an ideal of $S$. Then $S$ is a $1$-dimensional Cohen-Macaulay local ring and $I$ is a Cohen-Macaulay prime ideal that has $\ell(I) =1$, $G_1$ condition and $AN_{-1}^{-}$. By using Macaulay 2 \cite{GS}, we have $\depth(S/I)=1$ and the generalized Hilbert-Samuel polynomial is $P_I(n) =4(n +1) -7$. Hence $j_0(I) =4$ and $j_1(I) =7$. $J=(y)$ is a minimal reduction of $I$ (see \cite [Example 2.2]{MX}) and again by  Macaulay 2 we have $\lambda(I/J)=3$, $\lambda(I^2/JI)=2$ and $\lambda [R/(0 :_{R} I+ I :_{R} {\frak{m}}^{\infty})]=1$. Therefore
\[j_1(I) \geq \lambda (I/J) +\lambda (I^2/JI) +\lambda [R/(0 :_{R} I+ I :_{R} {\frak{m}}^{\infty})].\]
\end{Example}

\begin{Example}
Let $R=k [\kern-0.15em[ x,y,z ]\kern-0.15em]$ be polynomial ring where $k$ is a field and let $\frak{q}=(x^3,xz,yz)$. Set $S=R/\frak{q}$ and let $I=(x, y)$ be an ideal of $S$. Then $S$ is a $1$-dimensional Cohen-Macaulay local ring and $I$ is a Cohen-Macaulay prime ideal that has $\ell(I)=1$, $G_1$ condition and $AN_{d-2}^{-}$ (automatically satisfied since $d =1$). By using Macaulay 2 \cite{GS}, we have $\depth(S/I)=1$ and the generalized Hilbert-Samuel polynomial is $P_I(n) =3(n +1) -4$. Hence $j_0(I) =3$ and $j_1(I) =4$. $J=(y)$ is a minimal reduction of $I$ (see \cite [Example 2.2]{MX}) and again by  Macaulay 2 we have $\lambda(I/J)=2$, $\lambda(I^2/JI)=1$ and $\lambda [R/(0 :_{R} I+ I :_{R} {\frak{m}}^{\infty})]=1$ then we have,
\[j_1(I) = \lambda (I/J) +\lambda (I^2/JI) +\lambda [R/(0 :_{R} I+ I :_{R} {\frak{m}}^{\infty})].\]
\end{Example}

\end{document}